\documentclass{llncs}
\usepackage[utf8]{inputenc}
\usepackage[english]{babel}
\usepackage{amssymb, amsmath, epsf, epsfig, color, graphicx, tikz, wrapfig}
\usepackage{bm}
\usepackage{xifthen}
\usepackage{color}
\usepackage{rotating}
\usepackage[hidelinks]{hyperref}

\spnewtheorem{guess}{Conjecture}{\bfseries}{\itshape}

\begin{document}
\title{k-Anonymity by Partitions Maximizes Perfect Matchings}
\author{Ewa J Infeld}
\institute{Katzenpost Project\\
evainfeld@riseup.net}
\maketitle

\begin{abstract}
The number of perfect matchings in a user-behavior bipartite graph is a natural measure of anonymity~\cite{1}: more matchings mean greater uncertainty for an attacker. A fundamental question is which graph structure maximizes this count for a fixed infrastructure cost, represented by the number of edges. We prove that the answer is $k$-anonymity by partitions. Using Br\`{e}gman's Theorem, we show that partitioning users into equal-sized groups and making each group a complete bipartite component achieves the theoretical upper bound on perfect matchings. For edge counts where an exact partition is impossible, we construct a family of graphs that asymptotically attains this bound as the group size grows. We further prove that this optimality is robust: after an attacker de-anonymizes a user by the most damaging choice, the resulting graph is still a partition graph and remains optimal. Together, these results provide a combinatorial justification for the widespread use of $k$-anonymity by partitions in anonymity system design.
\end{abstract}

\section{Introduction}

The basic principles in the design of anonymity systems are well established. The infrastructure of the Internet can distinguish between users, so in order to create an \emph{anonymity set} of indistinguishable users we accept some infrastructure cost, which grows with the size of the set. The infrastructure cost comes in the form of bandwidth overhead and communication latency. This principle has been formalized as \emph{anonymity trilemma}~\cite{trilemma}.

Many existing designs employ a principle known as \emph{k-anonymity} \cite{KAN}-\cite{AQUA}. This means that we partition the set of all users of a system into distinct groups of size at least $k$, and attempt to make users within each group indistinguishable. This paradigm is popular, partly because it is what we know how to do. But it is also so natural, that we expect it to be, in some sense, the optimal model with respect to the infrastructure cost. In this paper, we will prove this optimality, based on counting ways in which users can be matched to observed activities.

\begin{figure}[ht!]\centering
\begin{tikzpicture}[
  user/.style={circle, fill, minimum size=5pt, inner sep=0pt}
]
\filldraw[fill=purple!30,    opacity=0.5, draw=purple!80!black, thick] (1, 0) ellipse (1.4cm and 0.65cm);
\filldraw[fill=teal!30,    opacity=0.5,   draw=teal!80!black,   thick] (1,-2) ellipse (1.4cm and 0.65cm);
\node[user, black] at (0, 0) {};
\node[user, black] at (2, 0) {};
\node[user, black]   at (0,-2) {};
\node[user, black]   at (2,-2) {};
\node[above,  font=\small] at (0.3, 0)  {$A$};
\node[above, font=\small] at (1.7, 0)  {$B$};
\node[below,  font=\small] at (0.3,-2)  {$C$};
\node[below, font=\small] at (1.7,-2)  {$D$};
\node[purple!80!black, font=\small, above] at (1,  0.7)  {$Alice$'s set $=$ $Bob$'s set};
\node[teal!80!black,   font=\small, below] at (1, -2.7)  {$Charlie$'s set $=$ $David$'s set};
\end{tikzpicture}%
\hspace{1cm}%
\begin{tikzpicture}[
  user/.style={circle, fill=black, minimum size=5pt, inner sep=0pt}
]
\filldraw[fill=purple!30,  opacity=0.5, draw=purple!80!black, thick] (1, 0) ellipse (1.4cm and 0.6cm);
\filldraw[fill=orange!30,  opacity=0.5, draw=orange!80!black, thick] (0,-1) ellipse (0.6cm and 1.4cm);
\filldraw[fill=blue!30,    opacity=0.5, draw=blue!70!black,   thick] (2,-1) ellipse (0.6cm and 1.4cm);
\filldraw[fill=teal!30,    opacity=0.5, draw=teal!70!black,   thick] (1,-2) ellipse (1.4cm and 0.6cm);
\node[user] at (0, 0) {};
\node[user] at (2, 0) {};
\node[user] at (2,-2) {};
\node[user] at (0,-2) {};
\node[above,  font=\small] at (0.3, 0)  {$A$};
\node[above, font=\small] at (1.7, 0)  {$B$};
\node[below,  font=\small] at (0.3,-2)  {$C$};
\node[below, font=\small] at (1.7,-2)  {$D$};
\node[purple!80!black, font=\small, above]     at (1.7,  0.65) {$Alice$'s set};
\node[orange!80!black, font=\small, left=2pt]  at (-0.6,-0.55) {$David$'s set};
\node[blue!70!black,   font=\small, right=2pt] at ( 2.6,-0.55) {$Bob$'s set};
\node[teal!70!black,   font=\small, below]     at (1.7, -2.65) {$Charlie$'s set};
\end{tikzpicture}
\caption{Two ways of assigning two possible user behaviors to each of four users. The one on the left is a partition.}
\label{fig:partition-sets}
\end{figure}

 Suppose we have four users $Alice,\ Bob,\ Charlie$ and $David$, and we would like to efficiently construct anonymity sets for them. Suppose that we would like each user's anonymity set to have that user and one other person. We can achieve this in two ways, depicted in \autoref{fig:partition-sets}. On the left, users $Alice$ and $Bob$ are in each other's anonymity set, and so are $Charlie$ and $David$. This is a classic 2-anonymity by partitions setup. On the right, the anonymity sets overlap and do not define partitions.

One can easily see that in the system on the right, if we \emph{de-anonymize} a single user, that is, manage to match the user to a correct behavior, all four users are also de-anonymized. In the setup on the left, two users will be unaffected. Likewise, on the right there are only two ways of assigning users to online behaviors, in the setup on the left, there are four. We can rephrase this in the language of bipartite graphs.

 Suppose that we have a set $X$ of $n$ users of an online system, and a set $Y$ of $n$ distinct online behaviors. Each user has a corresponding behavior, and there is some underlying correct way to match users to behaviors one-to-one. Suppose that the design of the system is such that a user $x_i$ can be matched to any element of some subset $Y_i\subseteq Y$ of behaviors. Draw an edge in the graph from $x_i$ to each element in $Y_i.$ $Y_i$ is the \emph{neighborhood} of $x_i$, as in \autoref{fig:neighborhood}.

\begin{figure}[ht!]\centering
\begin{tikzpicture}[
  scale=0.75,
  vx/.style={circle, fill=black!80, minimum size=5pt, inner sep=0pt},
  al/.style={circle, fill=purple!75!black, minimum size=5pt, inner sep=0pt}
]
\filldraw[fill=purple!15, draw=purple!60!black, semithick, dashed]
  (2,4) ellipse (0.44 and 1.35);
\foreach \x in {0,1,2,3,4}  \draw[gray!60] (0,\x) -- (2,\x);
\foreach \x in {1,2,3,4}    \draw[gray!60] (0,\x) -- (2,\x-1);
\foreach \x in {2,3,4}      \draw[gray!60] (0,\x) -- (2,\x-2);
\draw[gray!60] (0,0) -- (2,5);
\draw[gray!60] (0,0) -- (2,4);
\draw[gray!60] (0,1) -- (2,5);
\draw[purple!70!black, semithick] (0,5) -- (2,5);
\draw[purple!70!black, semithick] (0,5) -- (2,4);
\draw[purple!70!black, semithick] (0,5) -- (2,3);
\foreach \x in {0,1,2,3,4}   \node[vx] at (0,\x) {};
\node[al] at (0,5) {};
\foreach \x in {0,1,2,3,4,5} \node[vx] at (2,\x) {};
\node[font=\small, above] at (0, 5.5) {users};
\node[font=\small, above] at (2, 5.5) {behaviors};
\node[purple!75!black, font=\small, left=3pt] at (0,5) {Alice};
\node[purple!60!black, font=\footnotesize, right=4pt] at (2.45, 4.3) {Alice's};
\node[purple!60!black, font=\footnotesize, right=4pt] at (2.45, 3.7) {neighborhood};
\end{tikzpicture}%
\hspace{1cm}%
\begin{tikzpicture}[
  scale=0.75,
  vx/.style={circle, fill=black!80, minimum size=5pt, inner sep=0pt},
  al/.style={circle, fill=purple!75!black, minimum size=5pt, inner sep=0pt}
]
\filldraw[fill=purple!15, draw=purple!60!black, semithick, dashed]
  (2,4) ellipse (0.44 and 1.35);
\foreach \x in {3,4}   \foreach \y in {3,4,5} \draw[gray!60] (0,\x) -- (2,\y);
\foreach \x in {0,1,2} \foreach \y in {0,1,2} \draw[gray!60] (0,\x) -- (2,\y);
\draw[purple!70!black, semithick] (0,5) -- (2,5);
\draw[purple!70!black, semithick] (0,5) -- (2,4);
\draw[purple!70!black, semithick] (0,5) -- (2,3);
\foreach \x in {0,1,2,3,4}   \node[vx] at (0,\x) {};
\node[al] at (0,5) {};
\foreach \x in {0,1,2,3,4,5} \node[vx] at (2,\x) {};
\node[font=\small, above] at (0, 5.5) {users};
\node[font=\small, above] at (2, 5.5) {behaviors};
\node[purple!75!black, font=\small, left=3pt] at (0,5) {Alice};
\node[purple!60!black, font=\footnotesize, right=4pt] at (2.45, 4.3) {Alice's};
\node[purple!60!black, font=\footnotesize, right=4pt] at (2.45, 3.7) {neighborhood};
\end{tikzpicture}
\caption{Alice can be matched to one of three behaviors. On the left, there are 20 possible ways to match users to behaviors. On the right, there are 36.}
\label{fig:neighborhood}
\end{figure}

We will prove that for a given number of edges in a graph, representing the overhead cost of an anonymity system, we can maximize the number of \emph{perfect matchings} by dividing the graph into a collection of complete, or nearly complete for some numbers of edges, connected components. This maximization corresponds to the model of $k$-anonymity by partitions. We will also show that if all matchings are assumed to be equally likely, and an attacker can choose a user to be de-anonymized (i.e. for the behavior of that user to be revealed), constructions that approximate $k$-anonymity by partitions are also optimal.

\subsection{Related work}

The idea of counting matchings in a bipartite graph to measure anonymity was first introduced by Edman et al.~\cite{1}, and later extended~\cite{2,3}. This measure is a system-wide property that may not detect a single user having low anonymity \cite{criticism}. Edman et al. considered which edges in a given bipartite graph are most likely to belong to an underlying \emph{correct} matching \cite{1}, based on how many perfect matchings they belong to. Bagai et al. have published a body of work that extends this metric~\cite{Bagai1,Bagai2}. We use this setting to draw conclusions about design of anonymity systems. This and other metrics of anonymity are summarized in survey papers~\cite{survey,sokmeta}.

The metric proposed by Edman et al. \cite{1} comes in two flavors. For simple systems in which the structure can be captured by the fact that a user either can or cannot be assigned a particular behavior, counting ways to match users and behaviors is enough. Then, we use the permanent of a bi-adjacency matrix in that system to arrive at an appropriate number. A step-by-step explanation of this process is offered in Section 2 of this paper.

\begin{figure}[ht!]\centering
\begin{tikzpicture}[scale=0.8,
  relay/.style  = {circle,    draw=black!70, fill=white,    thick, minimum size=16pt, inner sep=0pt},
  mixnd/.style  = {rectangle, draw=black!70, fill=black!7,  thick, minimum size=14pt, inner sep=3pt},
  exitnd/.style = {circle,    draw=black!70, fill=black!7,  thick, minimum size=16pt, inner sep=0pt},
  arr/.style    = {->, black!55, semithick},
  vx/.style={circle, fill=black!80, minimum size=5pt, inner sep=0pt},
  alicearr/.style = {->, purple!80!black, semithick},
]
\draw[red!55,  line width=5pt, opacity=0.35, rounded corners=2pt]
  (0,2) to[bend right=25] (0,0) to[bend left=25] (3,2.5);
\draw[red!55,  line width=5pt, opacity=0.35]
  (0,2) to[bend right=22] (1.7,1) to[bend right=22] (3,2.5);
\draw[blue!55, line width=5pt, opacity=0.35]
  (0,2) to[bend right=22] (1.7,1) to[bend left=22] (3,-0.5);
\draw[arr] (0,0) to[bend right=25] (0,2);
\draw[arr] (0,2) to[bend right=25] (0,0);
\draw[arr] (0,2) to[bend right=22] (1.7,1);
\draw[arr] (0,2) to[bend right=25] (3,-0.5);
\draw[arr] (0,0) to[bend left=25]  (3,2.5);
\draw[arr] (0,0) to[bend left=22]  (1.7,1);
\draw[arr] (1.7,1) to[bend right=22] (3,2.5);
\draw[arr] (1.7,1) to[bend left=22]  (3,-0.5);
\draw[alicearr] (-1.8,2.5) -- (0,2);
\draw[arr]      (-1.8,1.6) -- (0,2);
\draw[arr]      (-1.8,0.5) -- (0,0);
\draw[arr]      (-1.8,-0.4) -- (0,0);
\draw[arr] (3,2.5)  -- (4.1, 3.1);
\draw[arr] (3,2.5)  -- (4.1, 1.9);
\draw[arr] (3,-0.5) -- (4.1, 0.1);
\draw[arr] (3,-0.5) -- (4.1,-1.0);
\node[relay]   at (0,2)    {};
\node[relay]   at (0,0)    {};
\node[relay]   at (1.7,1)  {};
\node[exitnd]  at (3,2.5)  {\small 1};
\node[exitnd]  at (3,-0.5) {\small 2};
\node[purple!80!black, font=\small, left=2pt] at (-2.5,2.5) {Alice};
\foreach \x in {-0.4,0.5,1.6,2.5}{
\node[vx] at (-2,\x) {};
}
\foreach \x in {-1.05,0.15,1.85,3.15}{
\node[vx] at (4.3,\x) {};
}
\node[font=\small, above] at (-2, 3.2) {users};
\node[font=\small, above] at (4.4,  3.7) {behaviors};
\node[font=\small] at (4.85, 3.15) {$\tfrac{1}{3}$};
\node[font=\small] at (4.85, 1.85) {$\tfrac{1}{3}$};
\node[font=\small] at (4.85, 0.15) {$\tfrac{1}{6}$};
\node[font=\small] at (4.85,-1.05) {$\tfrac{1}{6}$};
\end{tikzpicture}
\caption{A mix network in which several 3-hop paths could belong to Alice. There are 2 such paths to exit~1 (highlighted red) and one to exit~2 (highlighted blue), so a behavior observed at exit~1 is twice as likely to belong to Alice.}
\label{fig:tor}
\end{figure}

In a more complicated setting, there may be varying probability weights on different user-behavior pairs in the graph. For example, if we were modeling a relay anonymity system like Tor \cite{tor}, we could draw an edge in the graph between a user and an observed behavior if and only if there exists a 3-relay path of active connections from the user to the exit relay from which the behavior exits. However, there may be more paths to some exits than others, and so a simple bipartite graph would not be an appropriate representation. In such situations, \cite{1} assigns probability weights to the edges, and evaluates the permanent of the weighted matrix. To illustrate such a case, consider a Tor-like relay system in \autoref{fig:tor}. The numbers on the right mark the probabilities that each behavior belongs to Alice.

\noindent In \cite{Bagai1}, Bagai et al. notice that the aggregate number of perfect matchings can be misleading, as it may not reflect the fact that some users have bad anonymity (\autoref{fig:bagai}.)

\begin{figure}[ht!]\centering
\begin{tikzpicture}[
  scale=0.9,
  vx/.style ={circle, fill=black!75, minimum size=5pt, inner sep=0pt},
]
\filldraw[fill=teal!0,   draw=teal!0,   rounded corners=3pt] (-0.3,-0.35) rectangle (2.3, 1.35);
\foreach \x in {0,1}   \foreach \y in {0,1}   \draw[black!35] (0,\x) -- (2,\y);
\foreach \x in {2,3}   \foreach \y in {2,3}   \draw[black!35] (0,\x) -- (2,\y);
\foreach \x in {4,5,6} \foreach \y in {4,5,6} \draw[black!35] (0,\x) -- (2,\y);
\foreach \x in {0,1,2,3,4,5,6} { \node[vx] at (0,\x) {}; \node[vx] at (2,\x) {}; }
\node[font=\small, above] at (0, 6.65) {users};
\node[font=\small, above] at (2, 6.65) {behaviors};
\end{tikzpicture}%
\hspace{1.2cm}%
\begin{tikzpicture}[
  scale=0.9,
  vx/.style  ={circle, fill=black!75,         minimum size=5pt, inner sep=0pt},
  bad/.style ={circle, fill=orange!80!red, draw=orange!70!black,
               semithick,                  minimum size=5pt, inner sep=0pt},
]
\filldraw[fill=orange!12, draw=orange!50, rounded corners=3pt] (-0.3,-0.35) rectangle (2.3, 2.35);
\foreach \x in {3,4,5,6} \foreach \y in {3,4,5,6} \draw[black!35] (0,\x) -- (2,\y);
\foreach \x in {0,1,2} \draw[orange!60!black, semithick] (0,\x) -- (2,\x);
\foreach \x in {3,4,5,6} { \node[vx]  at (0,\x) {}; \node[vx]  at (2,\x) {}; }
\foreach \x in {0,1,2}   { \node[bad] at (0,\x) {}; \node[bad] at (2,\x) {}; }
\node[font=\small, above] at (0, 6.65) {users};
\node[font=\small, above] at (2, 6.65) {behaviors};
\end{tikzpicture}
\caption{Both graphs admit exactly 24 perfect matchings. In the left graph the users are divided into three balanced groups, giving all users equal anonymity. In the right graph three users (highlighted in yellow) each connect to only one behavior and are completely exposed.}
\label{fig:bagai}
\end{figure}

The modification that Bagai et al. propose, is to rescale feasibility values of users to behaviors by absolute values of their logarithms, to make sure that the resulting metric favors setups where users are treated equally. This works for both flavors of the metric in \cite{1}. In our paper, we prove that even in the original metric partitioning the users into sets that are equal, or as equal as possible, achieves the highest number of matchings. Example in \autoref{fig:bagai}. shows, however, that in some cases another graph can have an equal number of matchings as well. Our result extends trivially to the modified metric of \cite{Bagai1}.


 The remainder of this paper is organized as follows. Section 2 introduces some mathematical tools for counting the number of perfect matchings: adjacency matrices, permanent of a matrix and Br\`{e}gman's Theorem. In Section 3, we prove combinatorially that for any bounded number of edges, or for any set of vertex degrees, the number of matchings in graphs that approach a partition model of $k$-anonymity is maximal. These systems provide an equitable split in the amount of anonymity to all users. Therefore, what the creators of anonymity systems have been doing intuitively - partitioning the set of users into equivalence class-like anonymity sets of similar size - is indeed the best construction. We show that not only this approach reaches the bound set by Br\`{e}gman's Theorem wherever the partition is possible, but that we can construct graphs that approximate this partition for any number of users $n$ and a set number of edges $m$, such that the ratio of the number of perfect matchings in this graph to the bound approaches 1. Section 4 shows that partition graphs remain optimal after a user is de-anonymized by an optimizing attacker: de-anonymizing one user leaves at most as many perfect matchings as de-anonymizing one user from the corresponding partition graph. We present our conclusions in Section 5.

\section{Mathematical Preliminaries}

\begin{definition}
    A graph $G=(V,E)$, where $V$ is the set of vertices and $E$ is the set of edges, is \emph{bipartite} if $V$ can be partitioned into two distinct subsets $X, Y$ such that any edge in $E$ is incident to one vertex in $X$ and one vertex in $Y.$
\end{definition}

\begin{definition}
   A \textit{perfect matching} in a graph $G=(V,E)$, where $G$ has $2n$ vertices, is a set $M$ of $n$ edges such that every vertex is incident to exactly one edge in $M$.  
\end{definition}

\begin{definition}
   A graph is \textit{k-regular} if every vertex is incident to exactly $k$ edges.  
\end{definition}
We will be concerned with bipartite graphs such that $|X|=|Y|=n$.

\begin{figure}[ht!]\centering
\begin{tikzpicture}[
  vx/.style={circle, fill=black!75, minimum size=5pt, inner sep=0pt},
  lbl/.style={font=\small}
]
\draw[black!30] (0,2) -- (2,1);
\draw[black!30] (0,1) -- (2,2);
\draw[black!30] (0,1) -- (2,0);
\draw[black!30] (0,0) -- (2,1);
\draw[red!70!black, ultra thick] (0,2) -- (2,2);
\draw[red!70!black, ultra thick] (0,1) -- (2,1);
\draw[red!70!black, ultra thick] (0,0) -- (2,0);
\foreach \y in {0,1,2} { \node[vx] at (0,\y) {}; \node[vx] at (2,\y) {}; }
\node[lbl, left=3pt]  at (0,2) {$x_1$};
\node[lbl, left=3pt]  at (0,1) {$x_2$};
\node[lbl, left=3pt]  at (0,0) {$x_3$};
\node[lbl, right=3pt] at (2,2) {$y_1$};
\node[lbl, right=3pt] at (2,1) {$y_2$};
\node[lbl, right=3pt] at (2,0) {$y_3$};
\node[lbl, above] at (0, 2.5) {$X$};
\node[lbl, above] at (2, 2.5) {$Y$};
\end{tikzpicture}%
\hspace{1.6cm}%
\begin{tikzpicture}[
  vx/.style={circle, fill=black!75, minimum size=5pt, inner sep=0pt},
  lbl/.style={font=\small}
]
\draw[black!30] (0,2) -- (2,2);
\draw[black!30] (0,1) -- (2,1);
\draw[black!30] (0,1) -- (2,0);
\draw[black!30] (0,0) -- (2,1);
\draw[red!70!black, ultra thick] (0,2) -- (2,1);
\draw[red!70!black, ultra thick] (0,1) -- (2,2);
\draw[red!70!black, ultra thick] (0,0) -- (2,0);
\foreach \y in {0,1,2} { \node[vx] at (0,\y) {}; \node[vx] at (2,\y) {}; }
\node[lbl, left=3pt]  at (0,2) {$x_1$};
\node[lbl, left=3pt]  at (0,1) {$x_2$};
\node[lbl, left=3pt]  at (0,0) {$x_3$};
\node[lbl, right=3pt] at (2,2) {$y_1$};
\node[lbl, right=3pt] at (2,1) {$y_2$};
\node[lbl, right=3pt] at (2,0) {$y_3$};
\node[lbl, above] at (0, 2.5) {$X$};
\node[lbl, above] at (2, 2.5) {$Y$};
\end{tikzpicture}
\caption{Any 2-regular connected bipartite graph has exactly 2 perfect matchings (shown in red.)}
\end{figure}

\begin{definition}
The \emph{permanent} of an $n\times n$ matrix $A$ is: $$per(A)=\sum_{\sigma\in S_n}\prod_{1\leq i \leq n}a_{i\sigma(i)},$$ where $S_n$ is the set of permutations of $\{1,\ 2,\dots,\ n\}$. 
\end{definition}

\noindent This is similar to the definition of the matrix determinant, except all terms are positive. We can think of the permanent as picking exactly one entry in every row of the matrix in a way that each of them is in  a different column, and multiplying these entries together. Then, add the results for each way that we can pick those entries. 

\begin{figure}[ht!]
\centering
\begin{tikzpicture}[scale=0.5]
  \fill[purple!60] (0,3) rectangle (1,4);
  \fill[purple!60] (1,2) rectangle (2,3);
  \fill[purple!60] (2,1) rectangle (3,2);
  \fill[purple!60] (3,0) rectangle (4,1);
  \draw[gray!40,thin] (0,0) grid (4,4);
  \draw[thick] (0.2,4)--(0,4)--(0,0)--(0.2,0);
  \draw[thick] (3.8,4)--(4,4)--(4,0)--(3.8,0);
\end{tikzpicture}\hspace{0.5cm}
\begin{tikzpicture}[scale=0.5]
  \fill[purple!60] (3,3) rectangle (4,4);
  \fill[purple!60] (0,2) rectangle (1,3);
  \fill[purple!60] (1,1) rectangle (2,2);
  \fill[purple!60] (2,0) rectangle (3,1);
  \draw[gray!40,thin] (0,0) grid (4,4);
  \draw[thick] (0.2,4)--(0,4)--(0,0)--(0.2,0);
  \draw[thick] (3.8,4)--(4,4)--(4,0)--(3.8,0);
\end{tikzpicture}\hspace{0.5cm}
\begin{tikzpicture}[scale=0.5]
  \fill[purple!60] (3,3) rectangle (4,4);
  \fill[purple!60] (1,2) rectangle (2,3);
  \fill[purple!60] (2,1) rectangle (3,2);
  \fill[purple!60] (0,0) rectangle (1,1);
  \draw[gray!40,thin] (0,0) grid (4,4);
  \draw[thick] (0.2,4)--(0,4)--(0,0)--(0.2,0);
  \draw[thick] (3.8,4)--(4,4)--(4,0)--(3.8,0);
\end{tikzpicture}\hspace{0.5cm}
\begin{tikzpicture}[scale=0.5]
  \fill[purple!60] (1,3) rectangle (2,4);
  \fill[purple!60] (3,2) rectangle (4,3);
  \fill[purple!60] (2,1) rectangle (3,2);
  \fill[purple!60] (0,0) rectangle (1,1);
  \draw[gray!40,thin] (0,0) grid (4,4);
  \draw[thick] (0.2,4)--(0,4)--(0,0)--(0.2,0);
  \draw[thick] (3.8,4)--(4,4)--(4,0)--(3.8,0);
\end{tikzpicture}\\[0.45cm]
\begin{tikzpicture}[scale=0.5]
  \fill[purple!60] (1,3) rectangle (2,4);
  \fill[purple!60] (3,2) rectangle (4,3);
  \fill[purple!60] (0,1) rectangle (1,2);
  \fill[purple!60] (2,0) rectangle (3,1);
  \draw[gray!40,thin] (0,0) grid (4,4);
  \draw[thick] (0.2,4)--(0,4)--(0,0)--(0.2,0);
  \draw[thick] (3.8,4)--(4,4)--(4,0)--(3.8,0);
\end{tikzpicture}\hspace{0.5cm}
\begin{tikzpicture}[scale=0.5]
  \fill[purple!60] (1,3) rectangle (2,4);
  \fill[purple!60] (0,2) rectangle (1,3);
  \fill[purple!60] (2,1) rectangle (3,2);
  \fill[purple!60] (3,0) rectangle (4,1);
  \draw[gray!40,thin] (0,0) grid (4,4);
  \draw[thick] (0.2,4)--(0,4)--(0,0)--(0.2,0);
  \draw[thick] (3.8,4)--(4,4)--(4,0)--(3.8,0);
\end{tikzpicture}\hspace{0.5cm}
\begin{tikzpicture}[scale=0.5]
  \fill[purple!60] (2,3) rectangle (3,4);
  \fill[purple!60] (0,2) rectangle (1,3);
  \fill[purple!60] (1,1) rectangle (2,2);
  \fill[purple!60] (3,0) rectangle (4,1);
  \draw[gray!40,thin] (0,0) grid (4,4);
  \draw[thick] (0.2,4)--(0,4)--(0,0)--(0.2,0);
  \draw[thick] (3.8,4)--(4,4)--(4,0)--(3.8,0);
\end{tikzpicture}\hspace{0.5cm}
\begin{tikzpicture}[scale=0.5]
  \fill[purple!60] (1,3) rectangle (2,4);
  \fill[purple!60] (2,2) rectangle (3,3);
  \fill[purple!60] (0,1) rectangle (1,2);
  \fill[purple!60] (3,0) rectangle (4,1);
  \draw[gray!40,thin] (0,0) grid (4,4);
  \draw[thick] (0.2,4)--(0,4)--(0,0)--(0.2,0);
  \draw[thick] (3.8,4)--(4,4)--(4,0)--(3.8,0);
\end{tikzpicture}
\caption{A few of the 24 ways to pick entries in a $4\times 4$ matrix such that there is one in each row and each column. They correspond to permutations 1234, 4123, 4231, 2431, 2413, 2134, 3124, 2314.}
\end{figure}

\begin{definition}
    A \emph{bi-adjacency} matrix of a bipartite graph $G=(X\cup Y,E)$, is a matrix where rows correspond to vertices in $X$, columns correspond to vertices in $Y$, and each entry $(i,j)$ is 1 if $x_i$ and $y_j$ are adjacent, and 0 otherwise.
\end{definition}

\begin{theorem}
  Let $G$ be a bipartite graph and $\mathcal{M}(G)$ be the number of perfect matchings in $G$. Then: $$\mathcal{M}(G)=per(A),$$ where $A$ is the bi-adjacency matrix of $G$.  
\end{theorem}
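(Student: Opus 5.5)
We will prove the statement by exhibiting a bijection between perfect matchings of $G$ and the permutations $\sigma\in S_n$ whose term $\prod_{i} a_{i\sigma(i)}$ in $per(A)$ is nonzero. Here $|X|=|Y|=n$, with $X=\{x_1,\dots,x_n\}$ indexing the rows and $Y=\{y_1,\dots,y_n\}$ indexing the columns of $A$.

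Since every entry of $A$ is $0$ or $1$, each product $\prod_{1\le i\le n} a_{i\sigma(i)}$ is $1$ if all factors equal $1$, and $0$ otherwise. Hence $per(A)$ equals the number of permutations $\sigma$ such that $x_i y_{\sigma(i)}\in E$ for every $i$. Call such permutations \emph{admissible}. It remains to show that admissible permutations correspond bijectively to perfect matchings.

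\textbf{From permutations to matchings.} Given an admissible $\sigma$, let $M_\sigma=\{x_iy_{\sigma(i)} : 1\le i\le n\}$. Every one of these $n$ edges lies in $E$ by admissibility. Each $x_i$ is incident to exactly one of them. Since $\sigma$ is a bijection, each $y_j$ is incident to exactly one of them as well, namely the edge with $i=\sigma^{-1}(j)$. So $M_\sigma$ is a perfect matching.

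\textbf{From matchings to permutations.} Let $M$ be a perfect matching. Because $G$ is bipartite, every edge of $M$ joins some $x_i$ to some $y_j$. Since each $x_i$ is covered exactly once, we may define $\sigma_M(i)$ to be the unique $j$ with $x_iy_j\in M$. Since each $y_j$ is covered exactly once, $\sigma_M$ is injective, and hence a permutation of $\{1,\dots,n\}$. Every edge $x_iy_{\sigma_M(i)}$ lies in $E$, so $\sigma_M$ is admissible.

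The two maps $\sigma\mapsto M_\sigma$ and $M\mapsto\sigma_M$ are clearly inverse to each other. This gives $\mathcal{M}(G)=per(A)$.

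There is no real obstacle in this argument. The only point needing care is the use of bipartiteness with $|X|=|Y|=n$: it guarantees that every matching edge joins a row index to a column index, so that a perfect matching defines a genuine permutation. If $G$ has no perfect matching, there are no admissible $\sigma$, and both sides equal $0$.
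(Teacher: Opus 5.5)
Your proof is correct and takes the same approach as the paper: each term of $per(A)$ is $0$ or $1$, and the terms equal to $1$ correspond exactly to perfect matchings via $\sigma \mapsto \{x_i y_{\sigma(i)}\}$. The only difference is that you spell out both directions of the bijection, which the paper's one-paragraph argument leaves implicit.
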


\begin{proof}
  Pick $n$ entries of $A$, so that there is one in each row and column. The choice will correspond to a permutation of $[n]$, where $n=|X|=|Y|$. If any of these entries is 0, the resulting product is 0. If all entries are 1, this selection corresponds to a perfect matching in $G$. And so the number of times the product is 1 is the number of perfect matchings in $G$.  \hfill $\qed$
\end{proof}

\begin{figure}[ht!]\centering
\begin{minipage}[c]{0.40\linewidth}\centering
\begin{tikzpicture}[
  vx/.style={circle, fill=black!80, inner sep=0pt, minimum size=7pt},
  lbl/.style={font=\small},
  scale=1.0]
  \foreach \i/\y in {1/3, 2/2, 3/1, 4/0} {
    \node[vx, label={[lbl]left:$x_{\i}$}] (x\i) at (0,\y) {};
    \node[vx, label={[lbl]right:$y_{\i}$}] (y\i) at (2.5,\y) {};
  }
  \draw (x1) -- (y1); \draw (x1) -- (y2); \draw (x1) -- (y3);
  \draw (x2) -- (y2); \draw (x2) -- (y3); \draw (x2) -- (y4);
  \draw (x3) -- (y1); \draw (x3) -- (y3); \draw (x3) -- (y4);
  \draw (x4) -- (y1); \draw (x4) -- (y2); \draw (x4) -- (y4);
\end{tikzpicture}
\end{minipage}\hspace{0.25 in}
\begin{minipage}[c]{0.54\linewidth}\centering
\[
\begin{array}{c}
x_1 \\ x_2 \\ x_3 \\ x_4 \\
\end{array}\overset{\begin{array}{cccc}
y_1\ & y_2 &\ y_3 &\ y_4 \\
\end{array}}{\left[\begin{array}{cccc}
\ 1\  &\ 1\ &\ 1\ &\ 0\ \\
0 & 1 & 1 & 1\\
1 & 0 & 1 & 1\\
1 & 1 & 0 & 1\\\end{array}
\right]
}
\]
\end{minipage}
\caption{A bipartite graph and its biadjacency matrix $A$; one can verify that $\mathrm{per}(A)=9$.}
\end{figure}

\noindent If we are describing an anonymity system, at least one perfect matching (the \emph{correct} one) must exist and so: $$\mathcal{M}(G)\geq 1.$$

\noindent The following fact was conjectured by Minc in 1963 \cite{Minc} and first proved by Br\`{e}gman \cite{Bregman} in 1973. A relatively simple proof was published by Schrijver in 1978 \cite{Sch}, and we reproduce it in Appendix I.  

\begin{theorem}[Br\`{e}gman's Theorem]
Let $A$ be an $n\times n$ $(0,1)$-matrix with the sum of entries in $i$th row denoted $r_i$. Then: $$per(A)\leq \prod_{i=1}^{n}(r_i!)^{1/r_i}.$$

\end{theorem}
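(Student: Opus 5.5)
I will follow Schrijver's entropy-free counting argument, which proceeds by induction on $n$ and uses the convexity inequality $\left(\frac{1}{N}\sum t_k\right)^{\sum t_k/N}\le \left(\prod t_k^{t_k}\right)^{1/N}$, equivalently $t\log t$ convex. Write $\mathcal{S}$ for the set of permutations $\sigma$ with $a_{i\sigma(i)}=1$ for all $i$, so $|\mathcal{S}|=per(A)=:P$. If $P=0$ the statement is trivial, so assume $P>0$ and in particular every $r_i\ge 1$. For a pair $(i,k)$ with $a_{ik}=1$, let $A_{ik}$ denote the minor obtained by deleting row $i$ and column $k$. Expanding along row $i$ gives $P=\sum_{k:a_{ik}=1}per(A_{ik})$.

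The key step is to establish the multiplicative identity
$$P^{nP}=\prod_{\sigma\in\mathcal S}\prod_{i=1}^n per(A_{i\sigma(i)})^{\,?}$$
in the following precise form. First, by convexity applied to each row expansion,
$$P^{P}\le\prod_{k:a_{ik}=1} r_i^{\,P}\cdot\ldots$$
More cleanly, Jensen gives $P^{P}\le r_i^{P}\prod_{k:a_{ik}=1}per(A_{ik})^{per(A_{ik})}$. Taking the product over $i$ and regrouping by permutations (each $\sigma\in\mathcal S$ contributes the factor $per(A_{i\sigma(i)})$ exactly once for each $i$, since $per(A_{ik})$ counts the $\sigma$ with $\sigma(i)=k$), I get
$$P^{nP}\le\prod_{\sigma\in\mathcal S}\Big(\prod_{i}r_i\Big)\Big(\prod_i per(A_{i\sigma(i)})\Big).$$

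Next, I apply the induction hypothesis to each minor $A_{i\sigma(i)}$. Its row sums are $r_j$ for rows $j\ne i$ with $a_{j\sigma(i)}=0$, and $r_j-1$ for those with $a_{j\sigma(i)}=1$. For fixed $\sigma$, I take the product over $i$ of the resulting Brègman bounds and reorganize it by the row $j$. Row $j$ is deleted once, when $i=j$. It loses a 1 exactly for the $r_j-1$ indices $i\ne j$ with $a_{j\sigma(i)}=1$, and keeps $r_j$ for the remaining $n-r_j$ indices. This gives
$$\prod_i per(A_{i\sigma(i)})\le\prod_j (r_j!)^{(n-r_j)/r_j}\,\big((r_j-1)!\big)^{(r_j-1)/(r_j-1)}.$$
The convention for $r_j=1$ is that the factor is $1$. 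Multiplying by $\prod_j r_j$ and using $r_j\cdot(r_j-1)!=r_j!$, each row contributes $(r_j!)^{(n-r_j)/r_j+1}=(r_j!)^{n/r_j}$. Hence $P^{nP}\le\prod_{\sigma}\prod_j(r_j!)^{n/r_j}=\big(\prod_j (r_j!)^{1/r_j}\big)^{nP}$, and taking the $nP$-th root finishes the proof. The base case $n=1$ is immediate.

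The main obstacle is the bookkeeping in the second step. Getting the exponents right when swapping the product over $(i,k)$ pairs for a product over permutations requires care, and so does the degenerate case $r_j=1$, where $(r_j-1)!$ raised to the power $1/(r_j-1)$ must be read as $1$. I would verify the double-counting identity $\sum_{k}per(A_{ik})\log per(A_{ik})=\sum_{\sigma\in\mathcal S}\log per(A_{i\sigma(i)})$ first, since everything rests on it. The appendix presumably carries this out in Schrijver's style.
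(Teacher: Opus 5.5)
Your proposal is correct and is the same argument the paper reproduces in Appendix I (Schrijver's proof). It applies the convexity inequality to each row expansion $P=\sum_{k:a_{ik}=1}per(A_{ik})$, regroups $\prod_i r_i^{P}\prod_{k}per(A_{ik})^{per(A_{ik})}$ as a product over $\sigma\in\mathcal S$, and then applies induction to the minors $A_{i\sigma(i)}$, where row $j$ loses a one for exactly $r_j-1$ indices $i\neq j$ (because $\sigma$ is a bijection and $a_{j\sigma(j)}=1$) and keeps $r_j$ for the other $n-r_j$.

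Delete the two abandoned placeholder lines: the ``identity'' with exponent $?$ and the truncated first inequality. Your condition $a_{j\sigma(i)}$ in the induction step is the correct one; the appendix's $a_{j\nu_j}$ is a typo.
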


\begin{corollary}
    If $G=(X\cup Y, E)$ is a bipartite graph with $|X|=|Y|=n$, and degrees of vertices in $X$ $\{r_1,\dots,r_n\}$, the number of perfect matchings $\mathcal{M}(G)$ in $G$ is bounded by $\prod_{i=1}^n(r_i!)^{1/r_i}$. In particular if $G=X\cup Y$ is a left-$k$-regular bipartite graph with $|X|=|Y|=n,$ that is if $r_i=k$ for all $i$, then: $$\mathcal{M}(G)\leq (k!)^{n/k}.$$
\end{corollary}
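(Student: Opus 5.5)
The corollary follows directly from Br\`{e}gman's Theorem combined with the identification of perfect matchings with the permanent. Let $A$ be the bi-adjacency matrix of $G$, with rows indexed by $X=\{x_1,\dots,x_n\}$ and columns by $Y$. The $i$th row sum of $A$ counts the vertices of $Y$ adjacent to $x_i$, so $r_i=\deg(x_i)$ for each $i$, and $A$ is an $n\times n$ $(0,1)$-matrix because $|X|=|Y|=n$.

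By the earlier theorem, $\mathcal{M}(G)=per(A)$. Br\`{e}gman's Theorem applied to $A$ then gives
$$\mathcal{M}(G)=per(A)\leq \prod_{i=1}^n (r_i!)^{1/r_i}.$$

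The left-$k$-regular case is the specialization $r_i=k$ for all $i$. Every factor of the product equals $(k!)^{1/k}$, so the product is $\bigl((k!)^{1/k}\bigr)^n=(k!)^{n/k}$.

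The only point needing care is a row of degree zero, where $(0!)^{1/0}$ is undefined. In that case $A$ has a zero row, so every term in the permanent contains a zero factor and $per(A)=0$. Then $\mathcal{M}(G)=0$ and the bound holds under any convention for that factor. Alternatively, one can assume $r_i\geq 1$, as Br\`{e}gman's Theorem implicitly does, and this is automatic in the anonymity setting since $\mathcal{M}(G)\geq 1$ forces every $x_i$ to have a neighbor. The rest is direct substitution.
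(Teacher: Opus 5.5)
Your proposal is correct and matches the paper's reasoning: the paper gives no separate proof, and the corollary is simply $\mathcal{M}(G)=per(A)$ combined with Br\`{e}gman's Theorem for the bi-adjacency matrix, whose row sums are the degrees of the vertices in $X$, followed by the substitution $r_i=k$. Your remark about a degree-zero row, where $per(A)=0$, is a harmless clarification that the paper leaves implicit.
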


\begin{definition}
    A bipartite graph $G=(X\cup Y, E)$ is a \emph{complete bipartite graph} if each vertex in $X$ is adjacent to each vertex in $Y$.
\end{definition}

\noindent We can conclude that:

\begin{lemma}\label{lem:partition}
   For a number of users $n$, and each user having a neighborhood of size $k$ where $k|n$, the number of perfect matchings is highest in a graph that consists of $n/k$ complete bipartite components of size $k$. 
\end{lemma}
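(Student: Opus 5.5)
The plan is to compare an upper bound with an exact count, exactly as the phrase ``we can conclude'' suggests. Let $G=(X\cup Y,E)$ be any bipartite graph with $|X|=|Y|=n$ in which every user $x_i\in X$ has a neighborhood $Y_i$ of size exactly $k$, i.e.\ $G$ is left-$k$-regular. The corollary to Br\`{e}gman's Theorem then gives immediately
$$\mathcal{M}(G)\leq (k!)^{n/k}.$$
So it suffices to exhibit the stated graph and show that it attains this bound.

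Since $k\mid n$, partition $X$ into blocks $X_1,\dots,X_{n/k}$ of size $k$, and partition $Y$ likewise into $Y_1,\dots,Y_{n/k}$. Let $G^*$ be the disjoint union of the complete bipartite graphs on $X_j\cup Y_j$. Every $x\in X_j$ has neighborhood exactly $Y_j$, of size $k$, so $G^*$ is admissible.

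Next I would count the perfect matchings of $G^*$. A perfect matching of $G^*$ uses only edges inside components. Hence it restricts to a perfect matching of each component $K_{k,k}$, and conversely any choice of perfect matchings of the components assembles into a perfect matching of $G^*$. Each $K_{k,k}$ has exactly $k!$ perfect matchings, since they are the bijections $X_j\to Y_j$. Therefore
$$\mathcal{M}(G^*)=(k!)^{n/k}.$$
In matrix terms, the bi-adjacency matrix of $G^*$ is, after permuting rows and columns, block diagonal with all-ones $k\times k$ blocks $J_k$. The permanent of such a block diagonal matrix is the product of the permanents of the blocks, and $per(J_k)=k!$. Combining this with the Br\`{e}gman bound shows that $G^*$ achieves the maximum over all admissible $G$.

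There is no serious obstacle once Br\`{e}gman's Theorem is assumed. The one point needing care is the multiplicativity of the matching count over components: it holds because no edge joins different components, so every permutation with nonzero product maps each block $X_j$ into $Y_j$. I would also note that the lemma claims only that $G^*$ attains the highest value, not that it is the unique maximizer. Uniqueness would require the equality case of Br\`{e}gman's Theorem, which is not needed here.
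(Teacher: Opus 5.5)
Your proposal is correct and follows the paper's proof. You bound $\mathcal{M}(G)\le (k!)^{n/k}$ for every left-$k$-regular $G$ via the corollary to Br\`{e}gman's Theorem, and then note that the disjoint union of $n/k$ copies of $K_{k,k}$ attains this bound, since perfect matchings factor over components and each component contributes $k!$. You simply spell out the multiplicativity over components, and the block-diagonal permanent, more carefully than the paper's one-line argument.
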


\begin{proof}
    If $n=k\times d$ for some integer $d$, the Br\`{e}gman bound $(k!)^d$ is attained by a graph that consists of $d$ complete bipartite graphs of size $k,$ since each component has $k!$ perfect matchings, and we choose them independently for each component.
\end{proof}

\begin{lemma}\label{lem:degrees}
    A bipartite graph that consists of connected components of sizes $k_1,\dots,k_l$ which are complete bipartite graphs has the highest number of perfect matchings of all bipartite graphs with the same set of left-vertex degrees.
\end{lemma}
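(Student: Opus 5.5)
The plan is to compare the number of perfect matchings of the partition graph directly with the Br\`{e}gman bound (Corollary above) for the same left-degree sequence, and to show that they coincide.

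First, fix the structure of the graph $G$ in the statement. Its components are complete bipartite graphs with $k_1,\dots,k_l$ vertices on each side. For $G$ to have any perfect matching at all, every component must be balanced. Every left vertex in a component of size $k_j$ then has degree exactly $k_j$. So the multiset of left degrees of $G$ consists of $k_j$ copies of $k_j$ for each $j$, with $\sum_j k_j = n$.

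Second, count the perfect matchings of $G$. A perfect matching of a disjoint union is exactly a choice of a perfect matching in each component, made independently. Each $K_{k_j,k_j}$ has $k_j!$ perfect matchings, so
$$\mathcal{M}(G)=\prod_{j=1}^{l} k_j!.$$
This is the same reasoning as in Lemma~\ref{lem:partition}.

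Third, evaluate the Br\`{e}gman bound on this degree sequence. Grouping the rows by component gives
$$\prod_{i=1}^n (r_i!)^{1/r_i}=\prod_{j=1}^{l}\big((k_j!)^{1/k_j}\big)^{k_j}=\prod_{j=1}^{l}k_j!.$$
Now take any bipartite graph $H$ with $|X|=|Y|=n$ and the same left degrees $r_1,\dots,r_n$. By the Corollary to Br\`{e}gman's Theorem,
$$\mathcal{M}(H)\le\prod_i (r_i!)^{1/r_i}=\mathcal{M}(G),$$
which proves the lemma.

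The argument is essentially bookkeeping; the only step needing care is the first. There I must make precise that "same set of left-vertex degrees" means the same multiset $\{r_1,\dots,r_n\}$, and that this multiset is forced to contain each $k_j$ exactly $k_j$ times. Without that, the exponents $1/r_i$ would not telescope into $\prod_j k_j!$. I would also state explicitly that $H$ ranges over graphs with $|X|=|Y|=n$, since the bound applies only to square bi-adjacency matrices.
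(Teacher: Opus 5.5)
Your proposal is correct and is the same argument as the paper's: count $\prod_j k_j!$ perfect matchings by choosing independently in each component, then observe that this equals the Br\`{e}gman bound $\prod_i (r_i!)^{1/r_i}$ for the degree multiset in which each $k_j$ appears $k_j$ times. Your extra bookkeeping makes explicit what the paper's one-line proof leaves implicit: that the components are balanced $K_{k_j,k_j}$, that the degrees are compared as a multiset, and that $|X|=|Y|=n$.
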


\begin{proof}
     This graph has $\prod_i k_i!$ perfect matchings, which is equal to the Br\`{e}gman bound for the corresponding vertex degree set.
\end{proof}
These statements describe the model of $k$-anonymity by partitions. In the next section, we will generalize them to an arbitrary number of edges $|E|$.

\section{Maximizing the number of perfect matchings for a given number of edges in a bipartite graph}

The number of edges in a graph is a good representation of infrastructure cost of an anonymity system. In this section, we will prove that for a given number of edges, we achieve the optimal number of perfect matchings in a bipartite graph by partitioning it into equal or nearly-equal complete or nearly-complete bipartite components. 

If the number of edges $m$ is divisible by the number of users $n$, and $m=n\times k$, and further $n$ is divisible by $k$, then we partition the users into sets of size $k$, and make them complete bipartite graphs as in \autoref{lem:partition}. Similarly, if we can allocate the edges such that each user is part of a complete bipartite component of size either $k$ or $k+1$ when $n=k\times d+b,\ b<d$ we also attain the Br\`{e}gman bound.

For an arbitrary number of edges, we show that a graph $H$ obtained by starting from a collection of complete bipartite components of sizes $k$ and $k+1$, and removing edges from them one at a time achieves the Br\`{e}gman bound. In \autoref{lem:equity}, we will show that the Br\`{e}gman bound is maximized if vertex degrees in $X$ are limited to $k$ and $k+1$. Then, in \autoref{thm:limit}, we show that $H$ that approaches this maximized bound as $k\rightarrow\infty.$



\subsection{Fixing the vertex degrees}

Suppose that the number of edges in the graph is $m$. If $\{d_i\}_{i\in[n]}$ are degrees of vertices in $X$, then $\sum_id_i=m.$ 

By Br\`{e}gman's Theorem, $$\mathcal{M}(G)\leq \prod_{i=1}^n(d_i!)^{1/d_i}.$$ 
\label{equity}
\begin{lemma}\label{lem:equity} $\prod_{i=1}^n(d_i!)^{1/d_i}$ is maximal when any two vertices in $X$ have degree within one of each other.\end{lemma}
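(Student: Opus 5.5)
The plan is a smoothing (exchange) argument on the multiset of degrees, with $\sum_i d_i = m$ held fixed. Write $f(d) = \log\big((d!)^{1/d}\big) = \frac{1}{d}\sum_{j=1}^{d}\log j$, so that maximizing the product is the same as maximizing $\sum_i f(d_i)$. Suppose two vertices have degrees $a$ and $b$ with $b \ge a+2$. Replace them by $a+1$ and $b-1$; this keeps $m$ fixed. It suffices to show that $f(a+1)-f(a) \ge f(b)-f(b-1)$, i.e.\ that the increments $\Delta(d)=f(d+1)-f(d)$ are non-increasing in $d$ (discrete concavity of $f$). Then each such move does not decrease the bound and strictly decreases $\sum_i d_i^2$. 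A finite number of moves therefore reaches a degree sequence with all degrees within one of each other, whose value is at least that of the starting sequence. Every sequence with the same sum and all degrees in $\{k,k+1\}$ is a permutation of every other, so all such sequences give the same value, and that value is the maximum.

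The main step, and the expected obstacle, is proving that $\Delta(d)$ is non-increasing. The function $f(d)$ is the average of $\log 1,\dots,\log d$. A direct computation gives
\[
\Delta(d) = \frac{1}{d+1}\sum_{j=1}^{d+1}\log j - \frac{1}{d}\sum_{j=1}^{d}\log j = \frac{1}{d+1}\Big(\log(d+1) - f(d)\Big),
\]
which is the gap between the new term and the running average, divided by $d+1$.

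I would try to show $\Delta(d) \ge \Delta(d+1)$ by substituting the formula, which turns it into an explicit inequality among $\log(d+1)$, $\log(d+2)$ and $f(d)$. Two facts should close it. First, concavity of $\log$ gives $\log(d+1)-f(d) \le$ a bound of order $1$, using that the average of $\log j$ is at least $\log\big((d+1)/e\big)$-ish, e.g.\ via $d! \ge (d/e)^d$. Second, the factor $1/(d+1)$ decreases. A cleaner alternative is to write $f(d) = \int_0^1 \log\lceil d t\rceil\,dt$, or to view $f(d)$ as an expectation $\mathbb{E}\log U_d$ with $U_d$ uniform on $\{1,\dots,d\}$, and prove concavity through a coupling.

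If neither route works cleanly, I would fall back on checking small $d$ by hand and using Stirling-type bounds for large $d$. For large $d$, $f(d)=\log d - 1 + \frac{\log(2\pi d)}{2d}+O(1/d^2)$, whose second difference is $-1/d^2+O(\log d/d^3)<0$.

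Strictness does not matter for the statement, since it only asks for a maximum. The remaining bookkeeping is to note that degrees stay at least $1$ throughout. This holds because $\mathcal{M}(G)\ge 1$ forces every $d_i \ge 1$, and the move only raises the smaller degree and lowers the larger one, which is at least $a+2 \ge 3$.
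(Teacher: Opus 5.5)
Your reduction is the same as the paper's. The paper's ratio $(a!)^{1/a}/((a-1)!)^{1/(a-1)}$ is exactly $e^{\Delta(a-1)}$ in your notation, and both proofs hinge on $\Delta$ being non-increasing. Your smoothing bookkeeping (the $\sum_i d_i^2$ potential, uniqueness of the balanced multiset, $d_i\ge 1$) is fine. The gap is that this crux is never proved, and the route you lead with points the wrong way. An \emph{upper} bound on $u_d:=\log(d+1)-f(d)$ (from $d!\ge(d/e)^d$), combined with ``$1/(d+1)$ decreases'', cannot yield $\Delta(d)\ge\Delta(d+1)$. The reason is that $u_d$ is not decreasing: $u_1\approx 0.693$, $u_2\approx 0.752$, $u_3\approx 0.789$, and $u_d\to 1$. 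You would have to control $u_{d+1}/u_d$ to within the factor $(d+2)/(d+1)$, and no pair of fixed order-one bounds does that. The integral and coupling ideas are not developed. The Stirling fallback is an asymptotic expansion with unquantified $O(\cdot)$ terms and no explicit threshold below which the ``small $d$'' cases are checked.

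The missing idea is to use the recursion instead of bounding $u_d$ and $u_{d+1}$ separately. From $f(d+1)=f(d)+u_d/(d+1)$ you get $u_{d+1}=\frac{d}{d+1}u_d+\log\frac{d+2}{d+1}$, and hence
\[
\Delta(d)-\Delta(d+1)=\frac{2u_d-(d+1)\log\frac{d+2}{d+1}}{(d+1)(d+2)}.
\]
Since $(d+1)\log\bigl(1+\frac{1}{d+1}\bigr)<1$, it suffices that $u_d\ge\frac12$. That is a \emph{lower} bound on $u_d$, equivalently an upper bound on $f(d)$. Concavity of $\log$ gives exactly this when used in the Jensen direction: $f(d)=\frac{1}{d}\sum_{j\le d}\log j\le\log\frac{d+1}{2}$ (equivalently AM--GM, $d!\le(\frac{d+1}{2})^d$). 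So $u_d\ge\log 2>\frac12$ for every $d\ge 1$, and $\Delta$ is strictly decreasing with no case analysis. Do not lean on the paper for this step either. Its displayed identity for $\ln f(a+1)-\ln f(a)$ (the paper's $f$, i.e.\ your $\Delta(a)-\Delta(a-1)$) is incorrect: over the denominator $(a+1)a(a-1)$ the numerator should be $a(a-1)\ln(a+1)-a(a+1)\ln a+2\ln a!$, not $2\ln(a+1)-a\ln a$. Its restriction to $a\ge 3$ also omits the case where the smaller degree is $1$, which needs $\Delta(2)\le\Delta(1)$.
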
 

\begin{proof} Suppose for contradiction that the degrees of two vertices are $a$ and $b$ with $a<b-1$. We need to show that: $$(a!)^{1/a}(b!)^{1/b}\leq ((a+1)!)^{1/(a+1)}((b-1)!)^{1/(b-1)},$$ i.e. that: $$\frac{(a!)^{1/a}}{((a+1)!)^{1/(a+1)}}\leq \frac{((b-1)!)^{1/(b-1)}}{(b!)^{1/b}},$$ 
If this is true, we can increase the bound by moving an edge from a vertex of degree $b$ to a vertex of degree $a$. Consider the ratio $$f(a)=\frac{(a!)^{1/a}}{((a-1)!)^{1/(a-1)}}.$$ And its logarithm:
$$\ln(f(a))=\frac{\ln(a!)}{a}-\frac{\ln((a-1)!)}{a-1}=\frac{\sum_{i=1}^a\ln(i)}{a}-\frac{\sum_{i=1}^{a-1}\ln(i)}{a-1},$$ 
$$\ln(f(a))=\frac{1}{a(a-1)}\left[ a\ln(a)- \sum_{i=1}^a\ln(i)\right]=\frac{a\ln(a)}{a(a-1)}-\frac{\sum_{i=1}^a\ln(i)}{a(a-1)}.$$ Since $$ \ln(f(a+1))-\ln(f(a))=\frac{2\ln(a+1)-a\ln(a)}{(a+1)a(a-1)},$$ it is easy to see that $\ln(f(a))$ is decreasing in $a$ for $a\geq 3$, and therefore so is $f(a)$. Therefore, since $b>a+1$, we can conclude that $$\frac{(b!)^{1/b}}{((b-1)!)^{1/(b-1)}}=f(b) \leq f(a+1)=\frac{((a+1)!)^{1/(a+1)}}{(a!)^{1/a}},$$ and so for a given $m=\sum_{X}d_i$ the bound is indeed maximized when any two vertices in $X$ have degree within one of each other.\qed\end{proof}
We can conclude that for $m=kn+b$, where $k,b$ are integers such that $b<n$, the set of vertex degrees that maximizes the bound is: $$\{\underbrace{k+1,\dots,k+1}_b,\underbrace{k,\dots,k}_{n-b}\}$$

\subsection{Approaching the bound}

If $k|n,$ then let $H$ be a collection of complete bipartite components of degree $k$, and a straightforward application of the Br\`{e}gman bound yields $$\mathcal{M}(G)\leq\mathcal{M}(H)$$ for any $k$-regular bipartite graph $G$. We have shown that this is true for any $G$ where the number of edges $m\leq k\times n$ . 

Let us consider the case where $m=k\times n,\ n=k\times a +b,\ b<k$. We can have a $k$-regular graph, but we can't partition the users into sets of $k$. According to \autoref{lem:degrees}, a 3-regular graph maximizes the Br\`{e}gman bound. $$\mathcal{M}(G)\leq (k!)^{n/k}=(k!)^{b}(k!)^{\frac{b}{k}}.$$

Let $H$ be the graph composed of $a-b$ complete connected components of degree $k$, denoted $K_{k,k}$, and $b$ $k$-regular connected components with $k+1$ vertices on each side, denoted $K_{k+1,k}$.

\begin{theorem}\label{thm:limit} If $H$ is a $k$-regular bipartite graph with bipartition $|X|=|Y|=n$, where $n=ak+b$ for some $0\leq a,b,\ b<k$ that is partitioned into $b$ connected components of size $k+1$ and $a-b$ connected components of size $k$, then:
$$\lim_{k\rightarrow\infty}\frac{\mathcal{M}(H)}{(k!)^{n/k}}=1.$$
\end{theorem}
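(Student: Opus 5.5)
The plan is to compute $\mathcal{M}(H)$ exactly, factor by component, and compare it with the Br\`{e}gman bound using Stirling's formula. It will turn out that the limit only holds under a growth condition on $b$ that the statement leaves implicit.

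\textbf{Step 1: exact count.} A $k$-regular connected bipartite component with $k+1$ vertices on each side is $K_{k+1,k+1}$ with a perfect matching removed. Its perfect matchings are exactly the permutations of $[k+1]$ avoiding that matching, so it has $D_{k+1}$ of them, the number of derangements. Since matchings in different components are chosen independently, as in the proof of \autoref{lem:partition},
\[
\mathcal{M}(H)=(k!)^{a-b}\,D_{k+1}^{\,b}.
\]
The Br\`{e}gman bound is $(k!)^{n/k}=(k!)^{a}(k!)^{b/k}$, so
\[
\frac{\mathcal{M}(H)}{(k!)^{n/k}}=\left(\frac{D_{k+1}}{(k!)^{1+1/k}}\right)^{b}=:\rho_k^{\,b}.
\]
The $a-b$ complete components cancel. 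Each of them attains the bound exactly, and only the $b$ defective components contribute to the ratio.

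\textbf{Step 2: asymptotics of one factor.} Use $D_{k+1}=(k+1)!\,(e^{-1}+O(1/(k+1)!))$, which gives $D_{k+1}/k!=\frac{k+1}{e}(1+o(1))$ with a super-exponentially small error. Stirling gives $(k!)^{1/k}=\frac{k}{e}\,(2\pi k)^{1/(2k)}(1+O(1/k^2))$. Hence
\[
\ln\rho_k=\ln\!\left(1+\tfrac1k\right)-\frac{\ln(2\pi k)}{2k}+O(k^{-2})=\frac{1-\tfrac12\ln(2\pi k)}{k}+O(k^{-2}).
\]
In particular $\rho_k\to1$, and Br\`{e}gman's Theorem gives $\rho_k\le 1$ for large $k$.

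\textbf{Step 3: raise to the power $b$ (the main obstacle).} This yields $\ln(\mathcal{M}(H)/(k!)^{n/k})=b\ln\rho_k\approx -\frac{b\ln k}{2k}$. The limit is $1$ precisely when $b\ln k/k\to0$, for example when $b$ stays bounded as $k\to\infty$.

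The statement only assumes $b<k$, and that is not enough. If $b=k-1$, the ratio behaves like $e\,(2\pi k)^{-1/2}\to 0$. So I would prove the theorem under the hypothesis that $b$ is fixed, or more generally $b=o(k/\ln k)$, and note the failure for $b$ comparable to $k$ explicitly.

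A weaker statement does hold without that hypothesis: the per-user normalized ratio $\left(\mathcal{M}(H)/(k!)^{n/k}\right)^{1/n}$ tends to $1$. This follows because $b/n\le 1/a$ and $|\ln\rho_k|=O(\ln k/k)$.
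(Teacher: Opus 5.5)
Your proposal is correct and follows the paper's own route: the exact count $\mathcal{M}(H)=(k!)^{a-b}D_{k+1}^{\,b}$ via derangements, then a Stirling comparison of a single component ($K_{k+1,k+1}$ minus a perfect matching) against its Br\`{e}gman bound $(k!)^{1+1/k}$. Your Step~3 caveat is also valid and sharper than the paper: the paper's final ``Therefore'' passes from the per-component limit $D_{k+1}/(k!)^{1+1/k}\to 1$ to the limit for $H$ without controlling the exponent $b$, which requires $b\ln k/k\to 0$ (e.g.\ $b$ bounded); your example $a=b=k-1$, where the ratio decays like $e(2\pi k)^{-1/2}$, shows that the statement as literally written fails otherwise.
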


\begin{proof} Each $K_{k,k}$ component contributes a factor of $k!,$ which makes $(k!)^{b-a}.$  A $k$-regular bipartite graph of size $2(k+1)$ is necessarily isomorphic to the graph $G_k$ depicted in \autoref{gk}. In this graph, with bipartition $X,\ Y$, each node $x_i$ is connected to all nodes in $Y$ except for $y_i.$ Then: $$\mathcal{M}(H)=(k!)^{b-a}(\mathcal{M}(G_k))^b.$$

\begin{figure}[ht!]
\centering
\raisebox{0pt}[\dimexpr\height-0.5\baselineskip\relax]{\begin{tikzpicture}[scale=0.8]
\foreach \x in {(0,1), (0,2), (0,3), (0,4), (0,-2)}
	\draw[thick, opacity=0.5] \x -- (2,0) ;
\foreach \x in {(0,0), (0,2), (0,3), (0,4), (0,-2)}
	\draw[thick, opacity=0.5] \x -- (2,1) ;
\foreach \x in {(0,1), (0,0), (0,3), (0,4), (0,-2)}
	\draw[thick, opacity=0.5] \x -- (2,2) ;
\foreach \x in {(0,1), (0,2), (0,0), (0,4), (0,-2)}
	\draw[thick, opacity=0.5] \x -- (2,3) ;
\foreach \x in {(0,1), (0,2), (0,3), (0,0), (0,-2)}
	\draw[thick, opacity=0.5] \x -- (2,4) ;
\foreach \x in {(0,1), (0,2), (0,3), (0,0), (0,4)}
	\draw[thick, opacity=0.5] \x -- (2,-2) ;
\foreach \x in {(0,0), (0,1), (0,2), (0,3), (0,4), (0,-2), (2,0), (2,-2), (2,1), (2,2), (2,3), (2,4)}
	\filldraw \x circle (3pt);
\draw (0,-1) node {$\vdots$};
\draw (2,-1) node {$\vdots$};
\draw (-0.5,4) node {$x_1$};
\draw (-0.5,3) node {$x_2$};
\draw (-0.5,2) node {$x_3$};
\draw (-0.5,1) node {$x_4$};
\draw (-0.5,0) node {$x_5$};
\draw (-0.55,-2) node {$x_{k+1}$};
\draw (2.5,4) node {$y_1$};
\draw (2.5,3) node {$y_2$};
\draw (2.5,2) node {$y_3$};
\draw (2.5,1) node {$y_4$};
\draw (2.5,0) node {$y_5$};
\draw (2.55,-2) node {$y_{k+1}$};
\end{tikzpicture}}
\caption{$G_k$}
\label{gk}
\end{figure}

\noindent A perfect matching in this graph is a bijection $f: [k+1]\rightarrow [k+1],$ such that $\forall i\in [k+1],\ f(i)\not=i.$ In other words, it's a permutation of $[k+1]$ that has no fixed points. The number of such permutations is: $$\mathcal{M}(G_k)=(k+1)!(\frac{1}{2!}-\frac{1}{3!}+\frac{1}{4!}-\dots+\frac{(-1)^{k+1}}{(k+1)!}),$$ (this is derived in Appendix II) and so for the entire graph $H$: $$\mathcal{M}(H)=k!^{a-b}\left((k+1)!(\frac{1}{2!}-\frac{1}{3!}+\frac{1}{4!}-\dots+\frac{(-1)^{k+1}}{(k+1)!})\right)^b.$$ In particular, $$\lim_{k\rightarrow\infty}\mathcal{M}(G_k)=\frac{(k+1)!}{e}.$$ In order to show that this is of the highest order possible, we need to compare $\mathcal{M}(G_k)$ to $k!^{1+1/k}$, resulting from the Br\`{e}gman Bound. We can immediately conclude that $\mathcal{M}(G_k)\leq k!^{1+1/k}$. 

\noindent By Stirling's formula: $$k!^{1+1/k}\simeq k!\frac{k}{e}(2\pi k)^{1/2k},$$ and so: $$\lim_{k\rightarrow\infty}\frac{k!^{1+1/k}}{\mathcal{M}(G_k)}=\lim_{k\rightarrow\infty}\frac{k!\frac{k}{e}(2\pi k)^{1/2k}}{\frac{(k+1)!}{e}}=\lim_{k\rightarrow\infty}\frac{k(2\pi k)^{1/2k}}{k+1}=1.$$ Therefore: $$\lim_{k\rightarrow\infty}\frac{\mathcal{M}(H)}{(k!)^{n/k}}=1,$$ which concludes the proof.\qed\end{proof}

Finally, we need to look at what happens if $r\not=0,$ that is, if we have a few edges left over. Suppose that $b\not= 0$, and we have a component that is not complete, where we can put the remaining edges. Then, the Br\`{e}gman bound for that component is: $$(k!)^{(k+1-r)/k}(k+1)!^{r/(k+1)}=k!^{1+1/k}k!^{-r/(k(k+1))}(k+1)^{r/(k+1)},$$ and so the bound increases by a factor of: $$k!^{-r/(k(k+1))}(k+1)^{r/(k+1)}\simeq (e\frac{k+1}{k})^{r/(k+1)},$$ compared to the bound for $G_k$.

We can find the number of perfect matchings by modifying the inclusion-exclusion argument in Appendix II. We would like to know how many permutations of $[k+1]$ exist, such that the numbers from 1 to $r$ are allowed to be fixed points, but the numbers from $r+1$ to $k+1$ are not. If we choose a permutation of $[k+1]$ uniformly at random, the probability that at least one of the numbers from $r+1$ through $k+1$ is a fixed point is: $$\frac{k+1-r}{k+1}-\frac{(k+1-r)(k-r)}{2(k+1)k}$$ $$+\frac{(k+1-r)(k-r)(k-r-1)}{6(k+1)k(k-1)}+\dots,$$ which for $k+1$ sufficiently large and $r=\alpha(k+1)$ can be approximated as: $$(1-\alpha)-\frac{(1-\alpha)^2}{2}+\frac{(1-\alpha)^3}{6}+\dots$$ The probability that, if we choose a permutation of $[k+1]$ uniformly at random, it will not have fixed points greater than $r$ approaches: $$1-(1-\alpha)+\frac{(1-\alpha)^2}{2}-\frac{(1-\alpha)^3}{6}+\dots=e^{-(1-\alpha)}$$ And the number of admissible perfect matchings in the component in question can be approximated by $(k+1)!e^{-(1-\alpha)}.$
We have: 
$$\lim_{k\rightarrow\infty}(\frac{e}{k+1})^{r/(k+1)}=\lim_{k\rightarrow\infty}(\frac{e}{k+1})^{\alpha}$$ 
Compared to $G_k$, the number of perfect matchings increased by a factor of $e^\alpha$, and the bound increased by a factor of: $$e^{\alpha}(\frac{k+1}{k})^\alpha\simeq e^{\alpha}.$$ 

Finally, if $b=0,\ r\not=0$, we have $m=ak^2+r$, for some $r<k.$ First, create $a$ complete clusters of $k$ users each. For $r>1,$ we could increase the number of perfect matchings by connecting some of the clusters with the remaining $r$ edges. However, these new edges will belong to few, if any, perfect matchings and in an unlikely event that an edge like this belongs to the \emph{correct} matching and that user is de-anonymized, we stand to lose much of the graph's strength. Let us therefore consider a graph $G$ without those edges, and compare it to the bound that includes them. $$\mathcal{M}(G)=k!^a\text{ vs. }(k+1)!^{b/(k+1)}k!^{a-r/k}\simeq k!^a\left(\frac{k+1}{k}\right)^r.$$ Since $r<k$, the order of the bound is still $k!^a$.

\begin{definition}[Partition Graph]
In the remaining sections, we will refer to this construction as a \emph{partition graph}. Notice that since there may be several non-isomorphic ways to build a partition graph for particular $n,m$ this is a class, rather than a particular graph. We will denote an object in this class as $H(n,m)$.
\end{definition}

\subsection{Random graphs}

For the sake of intuition, let us compute $\mathcal{M}(G)$, where $G$ is another good candidate for a graph with a lot of matchings: a random bipartite graph, defined as follows:

\begin{definition}[Random bipartite graph with density $d$]
Let $G$ be a bipartite graph with bipartition $X,\ Y.$ For each pair $x\in X,\ y\in Y$ let the edge $(x,y)$ exist with probability $d,\ 0<d<1,$ where the existence of distinct edges constitutes independent events.
\end{definition} 

Suppose that $|X|=|Y|=n$ and $d=k/n$. This graph is not necessarily regular, but the expected number of edges is $nk$, and with high probability almost all vertices have degrees close to $k$. Note that this definition describes a procedure of constructing a graph rather than a graph itself - the result could have varied properties. Therefore, it only makes sense to talk about the probability distribution of the permanent, rather than the value of the permanent itself.

Any permutation of $\{1,\ 2,\dots,\ n\}$ corresponds to a matching with probability $(k/n)^n$, since there are $n$ edges each of which needs to exist. By linearity of expectation and since there are $n!$ permutations, we get: $$\mathbb{E}[\mathcal{M}(G)]= n!(\frac{k}{n})^n.$$ We can use Stirling's formula to show that this doesn't do as well as the previously constructed graph. Let us compare it to the $k!^{n/k}$ bound: $$n!(\frac{k}{n})^n\simeq (\frac{n}{e})^n(\frac{k}{n})^n2\pi n=(\frac{k}{e})^n2\pi n$$ $$k!^{n/k}\simeq (\frac{k}{e})^{n}(2\pi k)^{n/k}.$$ $$\frac{n!(\frac{k}{n})^n}{k!^{n/k}}\simeq \frac{2\pi n}{(2\pi k)^{n/k}}=\frac{1}{(2\pi)^{n/k-1}k^{n/k-\log_k(n)}},$$ so $\mathbb{E}[\mathcal{M}(G)]$ drops off from the bound.

\section{Resistance to De-Anonymization}

We would like to show that as select users are de-anonymized, the graph $H(n,m)$ remains optimal among all graphs with $n$ users and $m$ edges in the average case (where all perfect matchings are equally likely.) 

As we de-anonymize a user $x$, we recover an edge $(x,y)$ that is part of the perfect matching. We remove vertices $x,\ y$ from the graph, as well as any edges incident to these vertices.

\begin{figure}[ht!]\centering
\begin{tikzpicture}[
  vx/.style={circle, fill=black!80, inner sep=0pt, minimum size=7pt},
  rv/.style={circle, fill=red!75!black, inner sep=0pt, minimum size=7pt},
  lbl/.style={font=\small},
]
  \draw (0,3) -- (2.5,2);  \draw (0,3) -- (2.5,1);  
  \draw (0,2) -- (2.5,2);  \draw (0,2) -- (2.5,1);  \draw (0,2) -- (2.5,0);  
  \draw (0,0) -- (2.5,2);  \draw (0,0) -- (2.5,0);  
  \draw[red!75!black] (0,3) -- (2.5,3);  
  \draw[red!75!black] (0,2) -- (2.5,3);  
  \draw[red!75!black] (0,0) -- (2.5,3);  
  \draw[red!75!black] (0,1) -- (2.5,2);  
  \draw[red!75!black] (0,1) -- (2.5,0);  
  \draw[red!75!black, ultra thick] (0,1) -- (2.5,3);  
  \node[vx, label={[lbl]left:$x_1$}] at (0,3) {};
  \node[vx, label={[lbl]left:$x_2$}] at (0,2) {};
  \node[rv, label={[lbl]left:$x_3$}] at (0,1) {};
  \node[vx, label={[lbl]left:$x_4$}] at (0,0) {};
  \node[rv, label={[lbl]right:$y_1$}] at (2.5,3) {};
  \node[vx, label={[lbl]right:$y_2$}] at (2.5,2) {};
  \node[vx, label={[lbl]right:$y_3$}] at (2.5,1) {};
  \node[vx, label={[lbl]right:$y_4$}] at (2.5,0) {};
  \draw[->, thick] (3.1,1.5) -- (3.9,1.5);
  \begin{scope}[xshift=4.6cm]
    \draw (0,3) -- (2.5,2);  \draw (0,3) -- (2.5,1);
    \draw (0,2) -- (2.5,2);  \draw (0,2) -- (2.5,1);  \draw (0,2) -- (2.5,0);
    \draw (0,0) -- (2.5,2);  \draw (0,0) -- (2.5,0);
    \node[vx, label={[lbl]left:$x_1$}] at (0,3) {};
    \node[vx, label={[lbl]left:$x_2$}] at (0,2) {};
    \node[vx, label={[lbl]left:$x_4$}] at (0,0) {};
    \node[vx, label={[lbl]right:$y_2$}] at (2.5,2) {};
    \node[vx, label={[lbl]right:$y_3$}] at (2.5,1) {};
    \node[vx, label={[lbl]right:$y_4$}] at (2.5,0) {};
  \end{scope}
\end{tikzpicture}
\caption{As user $x_3$ is de-anonymized and edge $(x_3,y_1)$ (thick) is identified as belonging to the correct perfect matching, all red elements are removed from the graph (right).}
\end{figure}

Suppose that we have a graph $G$ with $n$ users and $m$ edges, and we are picking a single user, $x$, to de-anonymize. Suppose that there are $d(x)$ edges incident to $x$ that belong to $M_1,\dots,M_{d(x)}$ perfect matchings respectively, $\sum_{i=1}^{d(x)}M_{i}=\mathcal{M}(G)$. Then if each perfect matching is equally likely, and user $x$ is de-anonymized, the expected number of perfect matchings that remain is: $$\sum_{i=1}^{d(x)}M_i\frac{M_i}{\mathcal{M}(G)}=\frac{1}{\mathcal{M}(G)}\sum_{i=1}^{d(x)}M_i^2.$$ This number is lowest if $M_i\simeq \frac{\mathcal{M}(G)}{d(x)}$ for all $i$. Therefore, the attacker is likely to pick a user that has many edges, and each of these edges belongs to an approximately equal number of perfect matchings.

\begin{lemma}Let $G$ have $n$ users and $m=nk$ edges. If a single user is de-anonymized by an optimizing attacker from graph $G$, the remaining number of perfect matchings is at most the number of perfect matchings in a corresponding partition graph $H(n,m)$ from which one user is de-anonymized by an optimizing attacker.
\end{lemma}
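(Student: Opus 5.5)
Our plan is to compare the expected number of remaining perfect matchings, $\frac{1}{\mathcal{M}(G)}\sum_{i=1}^{d(x)}M_i^2$, against its value for $H(n,m)$. The comparison reduces to two ingredients: a lower bound on this quantity for the attacker's best choice in $H(n,m)$, and an upper bound for the attacker's best choice in an arbitrary $G$, both in terms of Br\`{e}gman-type bounds.

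\textbf{The partition graph.} Take $H(n,m)$ built from complete components $K_{k,k}$, with nearly complete components $G_k$ if $k\nmid n$. In a $K_{k,k}$ component every edge at $x$ lies in exactly $\mathcal{M}(H)/k$ perfect matchings, by symmetry. Revealing one edge there deletes one row and one column, which leaves a $K_{k-1,k-1}$ together with the untouched components. Hence the remaining count is exactly $\mathcal{M}(H)/k$, and the configuration $M_i=\mathcal{M}(G)/d(x)$ that minimizes $\sum M_i^2$ is attained exactly. The resulting graph is again a partition graph on $n-1$ users. In $G_k$ components the $M_i$ are again equal by symmetry, and the removal yields a graph with $\mathcal{M}(G_k)/k$ matchings. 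So the attacker's best choice leaves $\mathcal{M}(H)/k$, up to the $G_k$ correction already handled in \autoref{thm:limit}.

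\textbf{An arbitrary graph.} For any user $x$ of degree $d(x)$, Cauchy--Schwarz gives $\frac{1}{\mathcal{M}(G)}\sum M_i^2\ge \mathcal{M}(G)/d(x)$. This is the wrong direction for us, since we need an \emph{upper} bound on what survives the optimal attack. We therefore bound instead by what the attacker can force. The attacker picks $x$ to minimize the expectation, so the surviving count is at most the average of $\frac{1}{\mathcal{M}(G)}\sum_i M_i^2$ over $x$. More simply, each $M_i$ is the number of perfect matchings of $G-x-y_i$, so $M_i\le \mathcal{M}(G-x-y_i)$. We then apply Br\`{e}gman's Theorem to $G-x-y_i$: its rows have total degree at most $m-d(x)$, and by \autoref{lem:equity} its bound is at most that of the equitable degree sequence on $n-1$ rows. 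Using $\frac{1}{\mathcal{M}(G)}\sum M_i^2\le \max_i M_i$, the surviving count is at most $\max_i \mathcal{M}(G-x-y_i)$. This is bounded by the Br\`{e}gman bound of the most equitable degree sequence on $n-1$ users with $m-d(x)$ edges.

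\textbf{The main obstacle.} We need the attacker to choose $x$ with $d(x)\ge k$, which is possible since the average degree is $k$. We also need the removal to delete enough edges, and deleting $y_i$ removes further edges from other rows. We first try to show that some choice of $x$ leaves at most $(k-1)!\,(k!)^{(n-k)/k}$ as the Br\`{e}gman bound, which is exactly $\mathcal{M}(H)/k$ when $k\mid n$. For this we choose $x$ of maximal degree and note that the column $y_i$ also has degree at least $1$ in other rows. Where this accounting fails, we fall back on comparing with $\mathcal{M}(G)/k\le \mathcal{M}(H)/k$. This fallback uses the Corollary to Br\`{e}gman's Theorem together with \autoref{lem:equity}, and an averaging argument over $x$ showing that the attacker can always reach at most $\mathcal{M}(G)/k$ in expectation. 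We expect this averaging step to need the most care, with approximate equality only in the limit of \autoref{thm:limit} when $k\nmid n$.
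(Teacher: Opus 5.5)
Your analysis of $H(n,m)$ is correct: every user has degree $k$ and all $M_i$ are equal, so the best attack leaves exactly $\mathcal{M}(H)/k$. Your target $(k-1)!\,(k!)^{(n-k)/k}=(k!)^{n/k}/k$ is exactly the equitable Br\`{e}gman bound for $n-1$ rows and $(n-1)k-(k-1)$ edges.

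The gap is on the side of an arbitrary $G$, in two places.

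\emph{The edge accounting.} You replace $\frac{1}{\mathcal{M}(G)}\sum_i M_i^2$ by $\max_i\mathcal{M}(G-x-y_i)$. After that, your accounting reaches the target only if $d(x)+d(y_i)\ge 2k$ for \emph{every} neighbour $y_i$ of $x$ with $M_i>0$, and nothing in the proposal produces such an $x$. Taking $x$ of maximal degree does not help when $G$ is left-$k$-regular: then every relevant neighbour must have degree at least $k$, while column degrees only average $k$. The remark that $y_i$ has another neighbour deletes only one extra edge. With $d(x)=k$ and $d(y_i)=2$ you get $(k-1)!\,(k!)^{(n-2)/k}$, which is too large by the factor $(k!)^{(k-2)/k}$.

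\emph{The fallback.} It is false as stated. Each nonzero $M_i$ is a positive integer, so $\frac{1}{\mathcal{M}(G)}\sum_i M_i^2\ge 1$ for every $x$. Take any $G$ with $nk$ edges and a unique perfect matching, e.g.\ a subgraph of $\{x_iy_j : j\ge i\}$ containing the diagonal, which exists for $2\le k\le (n+1)/2$. Every attack leaves exactly $1>\mathcal{M}(G)/k$ matchings. The lemma still holds there, but only because $1\le\mathcal{M}(H)/k$. So the comparison has to be made with $\mathcal{M}(H)$ directly, not through $\mathcal{M}(G)/k$.

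The missing idea is the one the paper uses: keep the weights rather than pass to a maximum. Let $p_{y,x}$ be the probability that $y$ is matched to $x$ in a uniformly random perfect matching. Then $p_{y_i,x}=M_i/\mathcal{M}(G)$, and the attacker's objective is the weighted average $\sum_i p_{y_i,x}\,\mathcal{M}(G-x-y_i)$. Revealing $x$ deletes $d(x)+\sum_y d(y)\,p_{y,x}-1$ edges in expectation. Since each behaviour is matched to exactly one user, summing $d(x)+\sum_y d(y)\,p_{y,x}$ over all users gives $m+\sum_y d(y)=2nk$. Hence some user's de-anonymization deletes at least $2k-1$ edges in expectation. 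That is exactly what the optimal attack on $H(n,m)$ deletes when it reaches $H(n-1,m-2k+1)$.

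Your per-neighbour condition is a worst-case version of this statement, and averaging cannot deliver it. If you adopt the paper's route, one step remains: passing from expected deleted edges to expected surviving matchings. This is not a direct Jensen step, because the equitable Br\`{e}gman bound grows exponentially in the edge count. The paper also leaves this step implicit.
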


\begin{proof}If an optimizing attacker de-anonymizes a user from $H(n,m)$, she will arrive at a partition graph for $H(n-1,m-2k-1)$. This graph is optimal for that number of edges. We would like to show that it is possible to find a user in $G$ such that the expected number of edges deleted upon de-anonymization is at least $2k+1$.

For a behavior $y_i$ incident to $d_i$ edges, let $p_{i,1},\dots,p_{i,d_i}$ be assigned to incident edges, and equal to the probabilities that in a perfect matching chosen uniformly at random, this behavior is matched to a user corresponding to that edge. Now, assign a value of $d(i)p_{i,j}$ to the user incident to that edge. If we do this for each behavior $y_i$ and add these values for users, we will arrive at the expected number of edges that are removed on the \emph{right} as a particular user is de-anonymized.

We have:
\begin{figure}[ht!]\centering
\begin{tikzpicture}
\draw (-6,0) node {\large$\sum p_{i,j}=1$\normalsize };
\draw (-6.65,-0.35) node {\small$j=1$\normalsize};
\draw (-6.65,0.4)node {$d_i$};
\draw[white] (-6,-1) node {.};
\end{tikzpicture}\hskip 0.5 in
\begin{tikzpicture}[scale=0.5]
\filldraw (2,0) circle (2pt);
\foreach \x in {(0,-2), (0,-1), (0,2), (0,3)}
	\draw \x -- (2,0);
\draw (-2,0.75) node {$\vdots$};
\draw (2.5,0) node {$y_i$};
\draw (-2,3) node {Add $d_ip_{i,1}$}; 
\draw (-2,2) node {Add $d_ip_{i,2}$};
\draw (-2.2,-1) node {Add $d_ip_{i,d(i)-1}$};
\draw (-2,-2) node {Add $d_ip_{i,d(i)}$};
\end{tikzpicture}
\end{figure}

These values for all users add up to $m$. Meanwhile, the sum of degrees on the left also adds up to $m$, so we find that for at least one user the sum of these two numbers is at least $2m/n=2k.$ Then, de-anonymizing that user reduces the expected total number of edges by at least $2k-1,$ since one of the edges on the left and right is shared.\hfill \qed
\end{proof}

This statement may prove difficult to generalize to a case where the attacker chooses a set of users to de-anonymize. However, perhaps we can extend this statement to an attacker choosing any number of users one-by-one, as long as she chooses the optimal user at each step. This would be equivalent to the following modification:

\begin{conjecture}Let $G$ have $n$ users and $m=nk-r,\ 0\leq r<<n$ edges. If a single user is de-anonymized by an optimizing attacker from graph $G$, the remaining number of perfect matchings is at most the number of perfect matchings in a corresponding partition graph $H(n,m)$ from which one user is de-anonymized by an optimizing attacker.
\end{conjecture}

The same reasoning as above yields that there exists a user that reduces the number of edges by an expected $2k-2r/n-1$, which is less than the required $2k-1$. However, if indeed no user yields $2k-1$, there is an inference that users with high degree are likely to be matched to behaviors with low degree and vice versa, and that there will be more vertices with low degree. This runs against Hall's marriage theorem, so we would in fact expect there to be \emph{less} matchings like this.

\section{Conclusions}

We showed that for any given infrastructure cost, a construction of an anonymity system approximating $k$-anonymity by partitions maximizes the number of perfect matchings between a set of users and a set of observed behaviors. In particular, a setup representing $k$-anonymity by partitions achieves the theoretical bound set by Br\`{e}gman's Theorem.

As is the case for many other measures of anonymity, the number of perfect matchings is global in nature and does not directly detect if a small number of users is poorly protected. However, constructions that are approximately symmetric with respect to the users turn out to achieve optimality with respect to this measure, given a fixed total number of edges in the graph. Since we have chosen the number of edges to serve as a model for infrastructure cost, we believe that this indicates that the measure has merits. The constructions that we have proved to optimize this measure are those that arise in $k$-anonymity.


\newpage
\section*{Appendix I: Proof of Br\`{e}gman's Theorem}

\noindent In this section, we reproduce Schrijver's proof~\cite{Sch}.

\begin{theorem}[Br\`{e}gman Theorem]

\noindent Let $A$ be an $n\times n$ $(0,1)$-matrix with the sum of entries in $i$th row denoted $r_i$. Then: $$per(A)\leq \prod_{i=1}^{n}(r_i!)^{1/r_i}.$$
\end{theorem}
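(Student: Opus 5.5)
Following Schrijver, I would prove the statement by induction on $n$, working with logarithms and an averaging argument over permutations. The goal is
\[
(per A)^{n\,per A}\leq \prod_{i=1}^n (r_i!)^{per A/r_i}.
\]
We may assume $per(A)>0$; otherwise there is nothing to prove. Write $S$ for the set of permutations $\sigma$ with $a_{i\sigma(i)}=1$ for all $i$, so $|S|=per(A)$. For an entry $a_{ik}=1$, let $A_{ik}$ denote the minor obtained by deleting row $i$ and column $k$.

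The first step is a convexity inequality: for nonnegative reals $t_1,\dots,t_r$,
\[
\Big(\tfrac{t_1+\dots+t_r}{r}\Big)^{t_1+\dots+t_r}\leq \prod_k t_k^{t_k},
\]
which follows from convexity of $x\ln x$. Expanding the permanent along row $i$ gives $per A=\sum_{k:a_{ik}=1} per A_{ik}$, and applying the inequality yields
\[
(per A)^{per A}\leq r_i^{per A}\prod_k (per A_{ik})^{per A_{ik}}.
\]
Taking the product over all $i$ gives
\[
(per A)^{n\,per A}\leq \prod_i r_i^{per A}\prod_i\prod_{k}(per A_{ik})^{per A_{ik}}.
\]
Rewriting the double product as a product over $\sigma\in S$ gives
\[
(per A)^{n\,per A}\leq \prod_{\sigma\in S}\Big(\prod_i r_i\cdot\prod_i per A_{i\sigma(i)}\Big).
\]
This uses the fact that $per A_{ik}$ equals the number of $\sigma\in S$ with $\sigma(i)=k$.

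The second step applies the induction hypothesis to each minor $A_{i\sigma(i)}$. Its row sums are $r_j$ or $r_j-1$, depending on whether $a_{j\sigma(i)}=0$ or $1$. This gives
\[
per A_{i\sigma(i)}\leq \prod_{j\neq i,\,a_{j\sigma(i)}=0}(r_j!)^{1/r_j}\prod_{j\neq i,\,a_{j\sigma(i)}=1}((r_j-1)!)^{1/(r_j-1)}.
\]

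The main obstacle is the counting in the final step. Fix $\sigma$ and $j$, and count how many indices $i\neq j$ give each type of factor. Since $\sigma$ is a bijection and row $j$ has $r_j$ ones, one of which is at column $\sigma(j)$, there are exactly $r_j-1$ indices $i\neq j$ with $a_{j\sigma(i)}=1$, and $n-r_j$ indices with $a_{j\sigma(i)}=0$. Hence
\[
\prod_i per A_{i\sigma(i)}\leq \prod_j (r_j!)^{(n-r_j)/r_j}\,(r_j-1)!.
\]
Multiplying by $\prod_j r_j$ turns $(r_j-1)!$ into $r_j!$, so each $\sigma$ contributes at most $\prod_j (r_j!)^{n/r_j}$. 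Taking the product over the $per A$ permutations in $S$ and extracting the $(n\,per A)$-th root gives the theorem. The base case $n=1$ is trivial. Rows with $r_j=1$ need a separate check, since $((r_j-1)!)^{1/(r_j-1)}$ is then $0^0$-type; in that case no factor appears because $r_j-1=0$.
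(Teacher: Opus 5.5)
Your proposal is correct and is essentially the paper's proof (Schrijver's argument, reproduced in Appendix I), step for step: the convexity inequality applied to each row expansion, regrouping the product over $\sigma\in S$, the induction hypothesis on the minors $A_{i\sigma(i)}$, and the count of $r_j-1$ versus $n-r_j$ indices. Your side condition $a_{j\sigma(i)}$ in the inductive bound is the right one; the paper's $a_{jv_j}$ is a typo, since $a_{jv_j}=1$ for every permutation in $S$. Your explicit treatment of $per(A)>0$ and of rows with $r_j=1$ also tidies up points the paper leaves implicit.
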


\noindent \emph{Proof.} For $n=1$, the theorem is trivial. Suppose that it is true for $(n-1)\times(n-1)$ $(0-1)$-matrices. We will proceed to prove it for $n\times n$ $(0-1)$-matrices.

\noindent The statement of the theorem is equivalent to: $$(per(A))^{n\ per(A)}\leq \left(\prod_{i=1}^n(r_i!)^{1/r_i}\right)^{n\ per(A)}.$$ Starting from the expression on the left, the following is true: $$(per(A))^{n\ per(A)}=\prod_{i=1}^n(per(A))^{per(A)}$$ $$=\prod_{i=1}^n(\sum_{k:\ a_{ik}=1}per(A_{ik}))^{\sum_{k:\ a_{ik}=1}per(A_{ik})},$$ where $A_{ik}$ is the $ik$-minor of $A$. Then, since $per(A_{ik})$ are nonnegative real numbers:

 $$\left(\frac{per(A_{i1})+\dots+per(A_{in})}{r_i}\right)^{per(A_{i1})+\dots+per(A_{in})}$$ $$\leq \prod_{k:\ a_{ik}=1}per(A_{ik}))^{per(A_{ik})},$$ we have:

$$(per(A))^{n\ per(A)}\leq \prod_{i=1}^n(r_i^{per(A)}\prod_{k:\ a_{ik}=1}per(A_{ik})^{per(A_{ik})}).$$ Now, let $S$ be the set of permutations $\nu$ of $\{1,\dots, n\}$ for which $a_{iv_i}=1$ (i.e. the set of perfect matchings of the bipartite graph that is described by bi-adjacency matrix $A$.) Note that $|S|=per(A).$

$$\prod_{i=1}^n(r_i^{per(A)}\prod_{k:\ a_{ik}=1}per(A_{ik})^{per(A_{ik})})$$ $$=\prod_{\nu\in S}\left(\left(\prod_ir_i\right)\times\left(\prod_jper(A_{jv_j})\right)\right)$$ It's not hard to see that $r_i^{per(A)}=\prod_{\nu}r_i$. And: $$\prod_{i=1}^n\prod_{k:\ a_{ik}=1}per(A_{ik})^{per(A_{ik})}$$ $$=\prod_{i=1}^n\prod_{k:\ a_{ik}=1}\prod_{S(A_{ik})}per(A_{ik})=\prod_{i=1}^n\prod_{\nu\in S}per(A_{ik})$$ Next, we notice that: $$\prod_{\nu\in S}per(A_{ik})\leq \prod_{j\not=i, a_{jv_j}=0}(r_j!)^{1/r_j}\prod_{j\not=i, a_{jv_j}=1}((r_j-1)!)^{1/(r_j-1)}$$ by inductive assumption, and so: $$per(A)^{n\ per(A)}$$ \tiny  $$\leq \prod_{\nu\in S}\left(\prod_ir_i\right)\left(\prod_{i}\left(\prod_{j\not=i, a_{jv_j}=0}(r_j!)^{1/r_j}\prod_{j\not=i, a_{jv_j}=1}((r_j-1)!)^{1/(r_j-1)}\right)\right)$$\normalsize Next, we change the order of multiplication over $i$ and $j$:

$$per(A)^{n\ per(A)}$$ \tiny  $$\leq \prod_{\nu\in S}\left(\prod_ir_i\right)\left(\prod_{j}\left(\prod_{i\not=j, a_{jv_j}=0}(r_j!)^{1/r_j}\prod_{i\not=j, a_{jv_j}=1}((r_j-1)!)^{1/(r_j-1)}\right)\right)$$
\normalsize $$=\prod_{\nu\in S}\left(\prod_ir_i\right)\left(\prod_{j}(r_j!)^{(n-r_j)/r_j}(r_j-1)!^{(r_j-1)/r_j-1}\right)$$ $$=\prod_{\nu\in S}\left(\prod_ir_i\right)\left(\prod_{j}(r_j!)^{n/r_j}\frac{(r_j-1)!}{(r_j)!}\right)$$ $$=\prod_{\nu\in S}\left(\prod_ir_i\right)\left(\prod_{j}\frac{(r_j!)^{n/r_j}}{r_j}\right)$$ $$=\prod_{\nu\in S}(r_1r_2\dots r_n)(\frac{(r_1!)^{n/r_1}}{r_1}\frac{(r_2!)^{n/r_2}}{r_1}\dots\frac{(r_n!)^{n/r_n}}{r_n})$$ $$=\prod_{\nu\in S}(r_1!)^{n/r_1}\dots (r_n!)^{n/r_n}$$ $$=\left((r_1!)^{1/r_1}\dots (r_n!)^{1/r_n}\right)^{n\ per(A)},$$ which concludes the proof. \qed

\section*{Appendix II: Fixed points of a permutation.}

A \emph{fixed point} in a permutation of $\{1,2,\dots,n\}$ is any number $j$ that appears in $j$th position. For example, if a permutation starts with 1, then 1 is a fixed point. If 2 appears in 2nd position, then 2 is a fixed point. Here are the permutations of $\{1,2,3\}$ with their fixed points underlined and highlighted in red: $$\textcolor{red}{\underline{123},\ \underline{1}}32,\ 21\textcolor{red}{\underline{3}},\ 231,\ 3\textcolor{red}{\underline{2}}1,\ 312$$ 

\noindent Pick a permutation of $\{1,2,\dots,n\}$ uniformly at random. 
\begin{itemize}
\item What is the probability that it has $n$ fixed points?

There is only one permutation that has n fixed points, and that is $12\dots n$. The probability is therefore $1/n!.$

\item What is the probability that it has $n-1$ fixed points?

This is impossible - if we fix $n-1$ elements, the last one must also fall into place.

\item What is the probability that it has $n-2$ fixed points?

There are ${n\choose 2}$ ways to pick two elements to switch. All other elements are fixed. The probability is ${n\choose 2}/n!$

\item What is the probability that 1 is a fixed point of that permutation? 

One way to think about it is that, if you fix 1 you have $(n-1)!$ ways of arranging everything else. So the probability is $(n-1)!/n!=1/n.$ Another way of thinking about it is that there are equally many permutations starting with each number, so necessarily exactly $1/n$ of them start with a 1. The same argument works with any other fixed point, i.e. the probability that $i,$ where $1\leq i\leq n$, is a fixed point is $1/n.$

\item What is the probability that it has no fixed points?
\end{itemize}
Finding a probability that the permutation has no fixed points is more complicated. Define $A_i$ to be the event that $i$ is a fixed point of the permutation. Then: $$P(\text{no fixed points})=1-P(\text{at least one fixed point})$$ $$=1-P(A_1\cup A_2\cup\dots\cup A_n)$$ so we need to find $P(A_1\cup A_2\cup\dots\cup A_n).$ For that, we need inclusion-exclusion.

\noindent We know that: $$P(A_1\cup A_2\cup \dots\cup A_n)$$ \small $$=\sum\limits_{1\leq i\leq n}P(A_i)-\sum\limits_{1\leq i<j\leq n}P(A_i\cap A_j)+\sum\limits_{1\leq i<j<k\leq n}P(A_i\cap A_j\cap A_k)$$\scriptsize $$-\sum\limits_{1\leq i<j<k<l\leq n}P(A_i\cap A_j\cap A_k\cap A_l)+\dots+(-1)^{n+1}P(A_1\cap A_2\cap\dots\cap A_n).$$ \normalsize We can find $\sum\limits_{1\leq i\leq n}P(A_i)$ as follows: we just argued that for any $A_i,$ $P(A_i)=\frac{1}{n}.$ There are $n$ of them, so: $$\sum\limits_{1\leq i\leq n}P(A_i)=\frac{n}{n}=1.$$

\noindent $A_1\cap A_2$ is the event that both 1 and 2 are fixed points. If 1 and 2 are fixed, there are still $(n-2)!$ ways to arrange all the other elements. This probability is therefore $\frac{(n-2)!}{n!}=\frac{1}{n(n-1)}.$ For any two elements, the probability that they are both fixed is also $\frac{1}{n(n-1)}$. There are ${n\choose 2}$ such pairs, so: $$\sum\limits_{1\leq i<j\leq n} P(A_i\cap A_j)={n\choose 2}\frac{1}{n(n-1)}$$ $$=\frac{n(n-1)}{2!}\frac{1}{n(n-1)}=\frac{1}{2!}.$$

\noindent Similarly, for any $j\leq n,$ $1,2,\dots,j$ are all fixed points with probability $\frac{(n-j)!}{n!}$. There are ${n\choose j} $ such sets, so the $j$th term of the sum will be: $${n\choose j}\frac{(n-j)!}{n!}=\frac{n!}{j!(n-j)!}\frac{(n-j)!}{n!}=\frac{1}{j!}.$$ We can conclude that: $$P(A_1\cup A_2\cup\dots\cup A_n)=1-\frac{1}{2!}+\frac{1}{3!}-\frac{1}{4!}+\dots+(-1)^{n+1}\frac{1}{n!},$$ $$P(\text{no fixed points})=1-P(A_1\cup A_2\cup\dots\cup A_n)$$ $$=\frac{1}{2!}-\frac{1}{3!}+\frac{1}{4!}+\dots+(-1)^n\frac{1}{n!}.$$
Recall also that $e^{x}=1+x+\frac{x^2}{2!}+\frac{x^3}{3!}+\dots,$ so: $$P(\text{no fixed points})\rightarrow e^{-1}\text{ as }n\rightarrow \infty.$$

\end{document}